\definecolor{vert}{rgb}{0,0.6,0}
\theoremstyle{plain}
\newtheorem{thm}{Theorem}[section]
\newtheorem{defn}{Definition}
\newtheorem{lem}[thm]{Lemma}
\newtheorem{cor}[thm]{Corollary}
\newtheorem{prop}[thm]{Proposition}
\theoremstyle{remark}
\newtheorem{rem}{\bf{Remark}}
\numberwithin{equation}{section}
\newcommand{\N}{\mathbb{N}}
\newcommand{\R}{\mathbb{R}}
\newcommand{\bS}{\mathbb{S}}
\newcommand{\vep}{\varepsilon}
\newcommand{\al}{\alpha}
\newcommand{\sig}{\sigma}
\newcommand{\Gam}{\Gamma}
\newcommand{\ol}{\overline}
\newcommand{\ul}{\underline}
\newcommand{\pl}{\partial}
\begin{document}

\title[A scheme for time fractional equations]{\protect{On a discrete scheme for time fractional \\
fully nonlinear evolution equations}}
\author[Y. GIGA, Q. LIU, H. MITAKE]
{Yoshikazu Giga, Qing Liu, Hiroyoshi Mitake}

\thanks{
The work of YG was partially supported by Japan Society for the Promotion of Science (JSPS) through grants KAKENHI \#26220702,  \#16H03948, \#18H05323, \#17H01091.
The work of QL was partially supported by the JSPS grant KAKENHI \#16K17635 and the grant  \#177102 from Central Research Institute of Fukuoka University. 
The work of HM was partially supported by the JSPS grant  KAKENHI \#16H03948.
}

\address[Y. Giga]{
Graduate School of Mathematical Sciences, 
University of Tokyo 
3-8-1 Komaba, Meguro-ku, Tokyo, 153-8914, Japan}
\email{labgiga@ms.u-tokyo.ac.jp}

\address[Q. Liu]{Department of Applied Mathematics, Faculty of Science, Fukuoka University, Fukuoka 814-0180, Japan. 
}
\email{qingliu@fukuoka-u.ac.jp}

\address[H. Mitake]{
Graduate School of Mathematical Sciences, 
University of Tokyo 
3-8-1 Komaba, Meguro-ku, Tokyo, 153-8914, Japan}
\email{mitake@ms.u-tokyo.ac.jp}

\keywords{Approximation to solutions; Caputo's time fractional derivatives; Second order fully nonlinear equations; Viscosity solutions.}

\subjclass[2010]{
35R11, %Fractional partial differential equations
35A35, %Theoretical approximation to solutions
35D40. %Viscosity solutions
}

%\date{\today}
\maketitle

\begin{abstract}
We introduce a discrete scheme for second order fully nonlinear parabolic PDEs with Caputo's time fractional derivatives. We prove the convergence of the scheme in the framework of the theory of viscosity solutions. 
The discrete scheme can be viewed as a resolvent-type approximation. 

%is natural from the view point of both the definition of Caputo's time fractional derivatives and the elliptic theory. 
%Since our discrete scheme is versatile, the convergence result can be easily generalized.    
\end{abstract}

%%%%%%%%%%%%%%%%%%%%%%%%%%%%%%%%%%%%%%%%%%%%%%%%%%%%%%%%%%%%%%%%%%%%%%%%%%%%%%%%%%%%%%%%%%%%%%%%%%%%%%%%%%%%%%%%%%%%%%%%%%%%%%%%%%%%%%%%%%%%%%%%%%

%\tableofcontents

%%%%%%%%%%%%%%%%%%%%%%%%%%%%%%%%%%%%%%%%%%%%%%%%%%%%%%%%%%%

\section{Introduction} % and Main Results}
In this paper, we are concerned with the second order fully nonlinear PDEs with Caputo's time fractional derivatives: 
\begin{numcases}{}
\pl_t^{\al}u(x,t)+F(x, t, Du, D^2u)=0 &\qquad\text{for all $x\in\R^n, t>0,$} \label{eq:1}\\
u(x,0)=u_0(x) &\qquad\text{for all $x\in\R^n$},  \label{eq:ini}
\end{numcases}
where $\alpha\in(0,1)$ is a given constant,  $u:\R^n\times[0,\infty)\to\R$ is an unknown function and 
$Du$ and $D^2 u$, respectively, denote its spatial gradient and Hessian of $u$. 
We \textit{always} assume that $u_0\in BUC(\R^n)$, 
which denotes the space of all bounded uniformly continuous functions in $\R^n$. We denote  
\textit{Caputo's time fractional derivative} by $\pl_t^{\al}u$, i.e., 
\[
\pl_t^{\al}u(x,t):=\frac{1}{\Gam(1-\al)}\int_0^t(t-s)^{-\al}\pl_{s}u(x,s)\,ds, 
\]
where $\Gam$ is the Gamma function.

We assume that $F$ is a continuous \textit{degenerate elliptic} operator, that is,
\[
F(x, t, p, X_1) \leq F(x, t, p, X_2)  
\]
for all $x\in\R^n, t\geq 0, p \in \R^n$ and $X_1, X_2 \in \bS^n \text{ with } X_1 \geq X_2$, where $\bS^n$ denotes the space of $n \times n$ real symmetric matrices. 
Moreover, throughout this work we assume that $F$ is locally bounded in the sense that 
\begin{equation}\label{eq:op bound}
M_R:=\sup_{\substack{(x, t)\in \R^n\times [0, \infty)\\ |p|, |X|\leq R}}|F(x, t, p, X)|<\infty\qquad \text{for any $R>0$}.
\end{equation}

Studying differential equations with fractional derivatives is motivated by mathematical models that %comes from applied mathematics to 
describe diffusion phenomena in complex media like fractals, which is sometimes called \textit{anomalous diffusion} 
(see \cite{MK} for instance). 
It has inspired further research on numerous related topics. 
%attracted great interest from both mathematics and applications within the last few decades, 
%and developed in wide fields.  
We refer to a non-exhaustive list of references \cite{L, SY, ACV, C, GN, TY, A, N, KY, CKKW} and the references therein.  
%We should note that these are non-exhaustive list. 

Among these results, the authors of \cite{ACV, A} mainly study regularity of solutions to a space-time nonlocal equation with 
Caputo's time fractional derivative in the framework of viscosity solutions. 
More recently, unique existence of a viscosity solution to the initial value problem with Caputo's time fractional derivatives has been established in the thesis of Namba \cite{N-thesis} and independently and concurrently by Topp and Yangari \cite{TY}. The main part of \cite{N-thesis} on this subject has been published in \cite{GN, N}. For example, a comparison principle, Perron's method, and stability results for \eqref{eq:1} in bounded domains with various boundary conditions have been established in \cite{GN, N}. Similar results for whole space has been established in \cite{TY} for nonlocal parabolic equations.

Motivated by these works, in this paper we introduce a discrete scheme for \eqref{eq:1}--\eqref{eq:ini}, which will be explained in detail in the subsection below.

\subsection{The discrete scheme}\label{subsec:scheme} 
Our scheme is naturally derived from the definitions of Riemann integral and Caputo's time fractional derivative. 
%We introduce a discrete scheme for \eqref{eq:1}, \eqref{eq:ini} in this subsection. 
We first observe that %we have 
\begin{align*}
\pl_t^{\al}u(\cdot, mh)&
=\frac{1}{\Gam(1-\al)}\int_0^{mh}(mh-s)^{-\al}\pl_{s}u(x,s)\,ds\\
&=\frac{1}{\Gam(1-\al)}\sum_{k=0}^{m-1} \int_{kh}^{(k+1)h}(mh-s)^{-\al}\pl_{s}u(x,s)\,ds  
\end{align*}
for $m\in\N$ and $h>0$. 
If $u$ is smooth in $\R^n\times [0, \infty)$ and $h$ is small, then we can approximately think that 
\[
\int_{kh}^{(k+1)h}(mh-s)^{-\al}\pl_{s}u(x,s)\,ds
\approx
\int_{kh}^{(k+1)h}(mh-s)^{-\al}\frac{u(x,(k+1)h)-u(x,kh)}{h}\,ds. 
\]
Note that  $z\Gam(z)=\Gam(z+1)$ and
\begin{align*}
\int_{kh}^{(k+1)h}(mh-s)^{-\al}\,ds
=&\, 
\frac{1}{1-\al}\left(((m-k)h)^{1-\al}-((m-k-1)h)^{1-\al}\right)\\  
=&\,
\frac{1}{1-\al}f(m-k)h^{1-\al}, 
\end{align*}
where we set 
\begin{equation}\label{func:f}
f(r):=r^{1-\al}-(r-1)^{1-\al}\quad\text{for} \ r\ge1.  
\end{equation}
Thus, 
\begin{align*}
\pl_t^{\al}u(\cdot, mh)\approx 
&\, 
\frac{1}{\Gam(2-\al)h^\al}\sum_{k=0}^{m-1}
f(m-k)\left(u(x,(k+1)h)-u(x,kh)\right)\\
&\,
= 
\frac{1}{\Gam(2-\al)h^\al}\left\{
u(x,mh)
-\sum_{k=0}^{m-1} C_{m, k}u(x,kh)\right\}, 
\end{align*}
where we set 
\[
C_{m,0}:=f(m), \quad C_{m,k}:=f(m-k)-f(m-(k-1)) \quad\text{for} \ k=1,\ldots, m-1.
\]
Since $f$ is a non-increasing function, we easily see that  
\begin{equation}\label{positive}
C_{m,k}\ge 0 \quad\text{for} \ k=0,\ldots, m-1,   
\end{equation}
which implies monotonicity of the scheme 
(see Proposition \ref{prop:monotone}).

Inspired by this observation, for any fixed $h>0$, 
we below define a family of functions 
$\{U^h(\cdot, mh)\}_{m\in\N\cup\{0\}}\subset BUC(\R^n)$ by induction. 
Set $U^h(\cdot, 0):=u_0^h$, 
where $u_0^h\in BUC(\R^n)$ satisfies 
\begin{equation}\label{initial approx}
\sup_{\R^n} \left|u_0^h-u_0\right|\to 0\quad \text{as $h\to 0$.}
\end{equation}
Let $U^h(\cdot, mh)\in C(\R^n)$ for $m\geq 1$ be the viscosity solution of  
\begin{equation}\label{eq:m}
\frac{1}{\Gam(2-\al)h^\al}\left\{
u(x)
-\sum_{k=0}^{m-1} C_{m,k}U^h(x,kh)\right\}
+F\left(x, t, D u, D^2u\right)=0 \quad \text{in $\R^n$}.
\end{equation}  
Let us emphasize here that the equation \eqref{eq:m} is an (degenerate) elliptic problem with the elliptic operator strictly monotone in $u$. 
In fact, for any $m\geq 1$  the elliptic equation is of the form
\begin{equation}\label{eq:elliptic}
\lambda u(x)+F\left(x, t, Du, D^2u\right)=g(x)\quad \text{in $\R^n$,}
\end{equation}
where $\lambda>0$ and $g\in BUC(\R^n)$.  We can obtain such a unique  viscosity solution $U^h(\cdot, mh)\in BUC(\R^n)$ to \eqref{eq:m} with $t=mh$ for any $m\in \N$ under appropriate assumptions on $F$. 

Define the function $u^{h}:\R^n\times[0,\infty)\to\R$ by 
\begin{equation}\label{def:Uh}
u^h(x,t):=U^h(x, mh)\quad\text{for each}\ x\in \R^n, \ t\in[mh,(m+1)h), \ m\in\N\cup\{0\}.  
\end{equation}
Our main result of this paper is to show the convergence of  $u^h$ to the unique viscosity solution of \eqref{eq:1}--\eqref{eq:ini}.

We remark that our scheme can be regarded as a resolvent-type approximation.  Recall the implicit Euler scheme for the differential equation: 
\[
u_t + F[u]:=u_t+F\left(x, t, Du,D^2u\right) = 0, 
\]
which is given by 
\[
u^h(\cdot, mh) - u^h(\cdot, (m-1)h) + hF[u^h(\cdot, mh)]=0\quad (m\in \N). 
\] 
This is a typical scheme by approximating $u$ by a function $u^h$ piecewise linear in time with time grid length $h$. The resulting equation is a resolvent type 
equation for $u^h(\cdot, mh)$ if $u^h(\cdot, (m-1)h)$ is given. It is elliptic if the 
original equation is parabolic. %Our scheme is of the same type of approximation.

\subsection{Main Results}

%Let $h>0$ and 
%$U^h(\cdot,mh)$ for $m\in\N\cup\{0\}$ be the functions defined in Section \ref{subsec:scheme}. 
We first give an abstract framework on the convergence of $u^h$. 
%\begin{equation}\label{conv}
%u^h\to u \quad\text{locally uniformly in} \ \R^n\times [0, \infty) \ \text{as} \ h\to 0, 
%\end{equation}
%where $u$ is the unique viscosity solution to \eqref{eq:1}--\eqref{eq:ini}. 

\begin{thm}[Scheme convergence]\label{thm:main}
%Let $u_0\in BUC(\R^n)$. 
Assume that \eqref{eq:op bound} and the following two conditions hold.
\begin{enumerate}
\item[{\rm(H1)}] For any $g\in BUC(\R^n)$, there exists a viscosity solution $u\in BUC(\R^n)$ to \eqref{eq:elliptic} for any $t>0$. Moreover, if $u, v\in BUC(\R^n)$ are, respectively, a subsolution and a supersolution of \eqref{eq:elliptic} with any fixed $t>0$, then $u\leq v$ in $\R^n$. 
\item[{\rm(H2)}] Let $u\in USC(\R^n\times [0, \infty))$ and $v\in LSC(\R^n\times [0, \infty))$ be, respectively, a sub- and a supersolution of \eqref{eq:1}. Assume $u$ and $v$ are bounded in $\R^n\times [0, T)$ for any $T>0$. 
If $u(\cdot, 0)\leq v(\cdot, 0)$ in $\R^n$, %satisfies $u(\cdot, 0)\leq v(\cdot, 0)$ in $\R^n$, 
then $u\leq v$ in $\R^n\times [0, \infty)$.
\end{enumerate}
Let $u^h$ be given by \eqref{def:Uh} for any $h>0$, where initial data $u^h_0$ is assumed to fulfill \eqref{initial approx}.
Then, $u^h\to u$ locally uniformly in $\R^n\times [0, \infty)$ as $h\to 0$, where $u$ is the unique viscosity solution to \eqref{eq:1}--\eqref{eq:ini}. 
\end{thm}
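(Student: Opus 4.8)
The plan is to argue by the method of half-relaxed limits, in the spirit of Barles--Souganidis, exploiting three structural features of the scheme: unique solvability and comparison for the one-step elliptic problem \eqref{eq:elliptic} (guaranteed by (H1)), the monotonicity coming from $C_{m,k}\ge0$ in \eqref{positive} (Proposition \ref{prop:monotone}), and the normalization $\sum_{k=0}^{m-1}C_{m,k}=f(1)=1$, which I would record first by a telescoping computation and which makes the memory term $\sum_k C_{m,k}U^h(\cdot,kh)$ a \emph{weighted average} of past values. This normalization is the key to both the stability estimate and the consistency argument below.

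\textbf{Stability.} First I would produce uniform bounds: for each $T>0$ there is $C_T$ with $|u^h|\le C_T$ on $\R^n\times[0,T]$ for all small $h$. I would use discrete barriers of the form $w_m=\|u_0^h\|_{\Li}+\lambda(mh)^\al$. Since the discrete operator annihilates constants (by the normalization) and sends $t^\al$ to a quantity close to the positive constant $\Gam(1+\al)$, a choice of $\lambda$ large relative to $M_0=\sup|F(\cdot,\cdot,0,0)|$ makes $w$ a supersolution of every one-step problem; monotonicity and induction on $m$ then give $u^h\le w$ via the comparison in (H1), and symmetrically a lower bound. These bounds let me define the finite half-relaxed limits
\[
\ol u(x,t):=\limsup_{\substack{h\to0\\(y,s)\to(x,t)}}u^h(y,s),\qquad \ul u(x,t):=\liminf_{\substack{h\to0\\(y,s)\to(x,t)}}u^h(y,s),
\]
with $\ol u\in\USC$, $\ul u\in\LSC$, both bounded on each finite time strip and satisfying $\ul u\le\ol u$.

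\textbf{Consistency (the main obstacle).} I would show $\ol u$ is a subsolution and $\ul u$ a supersolution of \eqref{eq:1}. Take $\phi\in C^2$ such that $\ol u-\phi$ attains a maximum at $(x_0,t_0)$, $t_0>0$, which is \emph{global in time} over $\R^n\times[0,t_0]$ (this is exactly the notion dictated by the nonlocality of $\pl_t^\al$). After a standard perturbation making the maximum strict and attained (adding a coercive term in $x$), there are $h_j\to0$ and maximizers $(x_j,t_j)\to(x_0,t_0)$ of $u^{h_j}-\phi$ with $u^{h_j}(x_j,t_j)\to\ol u(x_0,t_0)$; write $t_j\in[m_jh_j,(m_j+1)h_j)$. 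Testing the one-step equation \eqref{eq:m} for $U^{h_j}(\cdot,m_jh_j)$ at $x_j$ yields
\[
\frac{1}{\Gam(2-\al)h_j^\al}\Big\{U^{h_j}(x_j,m_jh_j)-\sum_{k=0}^{m_j-1}C_{m_j,k}U^{h_j}(x_j,kh_j)\Big\}+F\big(x_j,m_jh_j,D\phi,D^2\phi\big)\le0.
\]
The crux is to bound the nonlocal bracket below by its value on $\phi$. Because the maximum is taken over all of $[0,t_0]$ in time, one has $u^{h_j}(x_j,kh_j)-\phi(x_j,kh_j)\le u^{h_j}(x_j,t_j)-\phi(x_j,t_j)$ for every $k<m_j$; multiplying by $C_{m_j,k}\ge0$, summing, and using $\sum_k C_{m_j,k}=1$ shows the bracket dominates the same expression with $U^{h_j}$ replaced by $\phi$, up to an $O(h_j)$ error from $t_j$ versus $m_jh_j$ (which, divided by $h_j^\al$, is $O(h_j^{1-\al})\to0$ since $\al<1$). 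The discrete operator applied to the smooth $\phi$ converges to $\pl_t^\al\phi(x_0,t_0)$ by the consistency built into the derivation of the scheme, and $F$ passes to the limit by continuity; hence $\pl_t^\al\phi(x_0,t_0)+F(x_0,t_0,D\phi,D^2\phi)\le0$. The supersolution property of $\ul u$ is symmetric, the positivity of the weights now being used in the reversed inequality.

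\textbf{Initial condition and conclusion.} A further barrier argument near $t=0$ --- again of the form $u_0(x)\pm\omega(\cdot)\pm\lambda t^\al$ with $\omega$ a modulus of continuity of $u_0$, together with \eqref{initial approx} --- yields $\ol u(\cdot,0)\le u_0\le\ul u(\cdot,0)$. Then $\ol u$ and $\ul u$ form a bounded sub- and supersolution on $\R^n\times[0,\infty)$ with ordered initial traces, so the comparison principle (H2) gives $\ol u\le\ul u$; combined with $\ul u\le\ol u$ this forces $u:=\ol u=\ul u$ to be continuous, to solve \eqref{eq:1}--\eqref{eq:ini}, and (by the standard equivalence) to be the locally uniform limit of $u^h$, with uniqueness immediate from (H2). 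I expect the genuinely delicate step to be the consistency argument: controlling the \emph{entire} time-history in the nonlocal bracket, which is precisely where the positivity $C_{m,k}\ge0$ and the normalization $\sum_k C_{m,k}=1$ are indispensable and why the test maximum must be taken globally in time.
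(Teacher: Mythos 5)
Your proposal follows essentially the same route as the paper: half-relaxed limits in the Barles--Souganidis spirit, uniform bounds via the barrier $\|u_0^h\|_{\infty}+\lambda(mh)^\al$, the consistency step driven by the positivity $C_{m,k}\ge0$ together with the telescoping identity $\sum_k C_{m,k}=f(1)=1$ and the locally uniform convergence $\pl_t^{\al,h}\phi\to\pl_t^\al\phi$, initial consistency via smooth barriers of the form $\phi_\sigma(x)+\lambda t^\al$, and the conclusion from (H2). The only small discrepancy is your claim that $u^{h_j}-\phi$ attains an exact space-time maximum (it need not, since $u^{h_j}$ is piecewise constant on half-open time intervals); the paper instead uses an approximate maximizer up to error $h_j$, which is harmless and is precisely absorbed by the $O(h_j^{1-\al})$ slack your own error budget already allows.
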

%The condition (H1) is established in \cite{JLS, CIL} under general assumptions on $F$ in the classical theory of viscosity solutions.  
We obtain the following corollary of Theorem \ref{thm:main} under more explicit sufficient conditions of (H1) and (H2). 
%We here give two results to justify Theorem \ref{thm:main}.  
 
\begin{comment}
\begin{cor}\label{cor:periodic}
Assume that $x\mapsto u_0(x), F(x, t, p, X)$ are periodic for all $t>0, p\in\R^n, X\in\bS^n$, \eqref{eq:op bound} and 
\begin{enumerate}
\item[{\rm(F1)}] There exists a modulus of continuity $\omega: [0, \infty)\to [0, \infty)$ such that
\[
F\left(x, t, \mu(x-y),Y\right)-F\left(y, t, \mu(x-y),X\right)\le \omega\left(|x-y|(\mu|x-y|+1)\right)
\]
for all $\mu>0$, $x, p\in\R^n$, $t\geq 0$ and $X,Y\in\bS^n$  satisfying 
\[
\left(
\begin{array}{cc}
X & 0 \\
0 & -Y
\end{array}
\right)
\le 
\mu
\left(
\begin{array}{cc}
I & -I \\
-I & I
\end{array}
\right). 
\]
\end{enumerate}
%Let $u^h$ be given by \eqref{def:Uh} for any $h>0$ with $u_0^h\in BUC(\R^n)$ satisfying \eqref{initial approx}.
Then, \eqref{conv} holds. 
\end{cor}
\end{comment}

\begin{cor}\label{cor:non-periodic}
Assume that \eqref{eq:op bound} and the following two conditions hold. 
\begin{enumerate}
\item[{\rm(F1)}] There exists a modulus of continuity $\omega: [0, \infty)\to [0, \infty)$ such that
\[
F\left(x, t, \mu(x-y),Y\right)-F\left(y, t, \mu(x-y),X\right)\le \omega\left(|x-y|(\mu|x-y|+1)\right)
\]
for all $\mu>0$, $x, p\in\R^n$, $t\geq 0$ and $X,Y\in\bS^n$  satisfying 
\[
\left(
\begin{array}{cc}
X & 0 \\
0 & -Y
\end{array}
\right)
\le 
\mu
\left(
\begin{array}{cc}
I & -I \\
-I & I
\end{array}
\right). 
\]
\item[{\rm(F2)}] There exists a modulus of continuity $\tilde{\omega}: [0, \infty)\to [0, \infty)$ such that 
\[
|F(x, t, p, X)-F(x, t, q, Y)|\leq \tilde{\omega}(|p-q|+|X-Y|)
\] 
for all $x\in \R^n$, $t\geq 0$, $p, q\in \R^n$ and $X, Y\in \bS^n$. 
%\item[(iii)] For any $R>0$, there exists a modulus of continuity $\omega_R: [0, \infty)\to [0, \infty)$ such that 
%\[
%|F(x, t, p, X)-F(x, t, p, Y)|\leq \omega_R(\|X-Y\|)
%\]
%for all $x\in \R^n$, $t\geq 0$, $p\in \R^n$ and $X, Y\in \bS^n$ with $\|X\|+\|Y\|\leq R$. 
\end{enumerate} 
Then, the conclusion of Theorem \ref{thm:main} holds. 
%Let $u^h$ be given by \eqref{def:Uh} for any $h>0$. Then \eqref{conv} holds, where $u$ is the unique viscosity solution to \eqref{eq:1}--\eqref{eq:ini}.  
\end{cor}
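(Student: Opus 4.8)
The plan is to derive hypotheses (H1) and (H2) of Theorem \ref{thm:main} from the structure conditions (F1), (F2) together with \eqref{eq:op bound}, and then to conclude directly by invoking that theorem. Both hypotheses are statements about viscosity sub/supersolutions, so the work splits naturally into a stationary (elliptic) part for (H1) and a time-fractional (parabolic) part for (H2); I expect the latter to be the genuine obstacle.

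For the comparison assertion in (H1), with $t>0$ and $\lambda>0$ fixed in \eqref{eq:elliptic}, I would use the standard doubling-of-variables technique. Given a bounded subsolution $u$ and a bounded supersolution $v$ with $\sup_{\R^n}(u-v)=:M>0$, I would maximize
\[
\Phi(x,y)=u(x)-v(y)-\frac{\mu}{2}|x-y|^2-\varepsilon(\langle x\rangle+\langle y\rangle),
\]
where $\langle x\rangle=(1+|x|^2)^{1/2}$ is inserted to force the supremum to be attained on the unbounded domain $\R^n$. Applying the Crandall--Ishii lemma produces matrices $X,Y$ satisfying the matrix inequality appearing in (F1); subtracting the two viscosity inequalities and using the strict monotonicity term $\lambda(u-v)$ reduces matters to estimating a difference of $F$-values. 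Condition (F1) controls the leading term $F(x_\mu,t,\mu(x_\mu-y_\mu),Y)-F(y_\mu,t,\mu(x_\mu-y_\mu),X)$ by $\omega(|x_\mu-y_\mu|(\mu|x_\mu-y_\mu|+1))$, which vanishes as $\mu\to\infty$ because $\mu|x_\mu-y_\mu|^2\to0$, while (F2) absorbs the $O(\varepsilon)$ perturbation of the gradient slot caused by the localizing term $\varepsilon\langle\cdot\rangle$. Sending $\mu\to\infty$ and then $\varepsilon\to0$ yields $\lambda M\le0$, a contradiction.

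For existence in (H1), I would run Perron's method, which is legitimate once comparison holds. Since $g\in BUC(\R^n)$ is bounded and, by \eqref{eq:op bound}, $|F(x,t,0,0)|\le M_0<\infty$, sufficiently large (resp. small) constants are supersolutions (resp. subsolutions) of \eqref{eq:elliptic} and serve as ordered barriers. Perron's method then produces a solution whose upper and lower semicontinuous envelopes coincide by comparison, giving continuity; boundedness is inherited from the constant barriers, and a uniform modulus of continuity (hence membership in $BUC(\R^n)$) follows from a doubling estimate of the same type as above applied to $u(\cdot)$ and its translates, again using (F1) and (F2). This establishes (H1).

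The delicate point is (H2), the comparison principle for the fully nonlinear equation \eqref{eq:1} carrying Caputo's time-fractional derivative. The difficulty is that $\pl_t^{\al}$ is \emph{nonlocal} in time, so the usual parabolic doubling in $(x,t)$ cannot simply freeze the time-derivative term; instead one must exploit the sign/monotonicity property of the fractional derivative at an interior maximum of the doubled function, as developed in \cite{GN, N, TY}. Since (F1) is precisely the spatial structure condition assumed in those works, I would invoke their comparison results, adapting the bounded-domain argument of \cite{GN, N} to the whole space $\R^n$ by the same $\varepsilon\langle\cdot\rangle$-localization used above---whose gradient error is controlled by (F2)---or equivalently appealing to the whole-space treatment in \cite{TY}. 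With (H1) and (H2) in hand, Theorem \ref{thm:main} applies and yields the stated convergence; the time-fractional comparison (H2) is the step I expect to require the most care.
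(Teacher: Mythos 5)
Your proposal is correct and takes essentially the same route as the paper: the paper establishes (H1) by citing the unbounded-domain comparison result of \cite{JLS} (whose proof is exactly your doubling argument with a localizing penalty controlled via (F2)) together with Perron's method using the constant barriers $\pm C$ supplied by \eqref{eq:op bound} with $R=0$, and it obtains (H2) by invoking \cite{TY}, just as you do, before applying Theorem \ref{thm:main}.
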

\begin{rem}
%Thanks to the known comparison results in a bounded domain,  
The assumption (F2) can be removed in the presence of periodic boundary condition, that is, $x\mapsto u_0(x)$ and $x\mapsto F(x, t, p, X)$ 
are periodic with the same period. % for all $t>0, p\in\R^n$ and $X\in\bS^n$. 
Recall that in a bounded domain or with the periodic boundary condition,  (H1) is established in \cite{CIL} 
and (H2) %for the Cauchy problem \eqref{eq:1}--\eqref{eq:ini}
 is available in \cite[Theorem 3.1]{GN}  \cite[Theorem 3.4]{N} under (F1).

 %for first order equations with the periodic boundary condition %without  $BUC(\R^n)$ regularity of the initial data. 
%and  for second order Dirichlet boundary problems

\end{rem}

%Let us discuss the conditions (H1) and (H2). We first remark that (H1) can be obtained under (F1) and (F2) in addition to \eqref{eq:op bound}. 
The comparison result in (H1) under (F1), (F2) in an unbounded domain is due to \cite{JLS}.  Existence of solutions in this case can be obtained by Perron's method. In fact, thanks to \eqref{eq:op bound} with $R=0$, we can take $C>0$ large such that $C$ and $-C$ are, respectively, a supersolution and a subsolution of \eqref{eq:elliptic}. We then can prove the existence of solutions by adopting the standard argument in \cite{CIL, G}. 
In addition, as shown in \cite{TY}, (H2) is also guaranteed by (F1) and (F2). 

Our results above apply to a general class of nonlinear parabolic equations. We refer the reader to \cite[Example 3.6]{CIL} for concrete examples of $F$ that satisfy our assumptions, especially the condition (F1). 

%On the other hand, we can use (F1), (F2) to 
%show (H2) by slightly modifying the argument of doubling variables in the proof of \cite[Theorem 3.4]{N}. See Appendix \ref{sec:appendix}. 

Finally, it is worthwhile to mention that the idea for a discrete scheme in this paper can be adopted to handle a more general type of time fractional derivatives as in \cite{C, CKKW},  provided that the comparison theorems can be obtained. % for more general time-fractional equations. 
In this paper, we choose Caputo's time fractional derivatives to simplify the presentation.

\smallskip
This paper is organized as follows. In Section \ref{sec:pre}, we give the monotonicity and boundedness of discrete schemes. Section \ref{sec:main} is devoted to the proof of Theorem \ref{thm:main}.

\section{Preparations}\label{sec:pre}
We first recall the definition of viscosity solutions to \eqref{eq:1}.% (see \cite[Definition 2.2]{N}).
\begin{defn}[Definition of viscosity solutions]\label{defn vis}
For any $T>0$, a function $u\in USC(\R^n\times[0,T))$ {\rm(}resp., $u\in LSC(\R^n\times[0,T))${\rm)}
is called a viscosity subsolution {\rm(}resp., supersolution{\rm)} of \eqref{eq:1} 
if for any $\phi\in C^{2}(\R^n\times [0,T))$ 
one has 
\begin{equation}\label{def:vis}
\partial_t^\alpha \phi(x_0, t_0)+F(x_0, t_0, D \phi(x_0, t_0), D^2 \phi(x_0, t_0)) 
\le {\rm(resp.,}\ge{\rm)}\ 0
\end{equation}
whenever $u-\phi$ attains a local maximum {\rm(}resp., minimum{\rm)} at $(x_0, t_0)\in \R^n\times (0,T)$. %We say that $u$ is a viscosity subsolution (resp., supersolution) of \eqref{eq:1}--\eqref{eq:ini} if $u$ further satisfies $u(\cdot,0)\le u_0$ on $\R^n$ {\rm(}resp., $u(\cdot,0)\ge u_0${\rm)}.

We call $u\in C(\R^n\times[0,T))$ a viscosity solution of \eqref{eq:1} if 
$u$ is both a viscosity subsolution and a supersolution of \eqref{eq:1}. %In addition, $u$ is said to be a viscosity solution of \eqref{eq:1}--\eqref{eq:ini} if $u$ is a viscosity solution of \eqref{eq:1} that satisfies \eqref{eq:ini}. 

\end{defn}

\begin{rem}
Our definition essentially follows \cite[Definition 2.2]{N}. In fact, since 
\begin{equation}\label{equiv time der}
\pl_t^{\al}\phi(x, t)
=
\frac{1}{\Gam(1-\al)}\left(\frac{\phi(x, t)-\phi(x, 0)}{t^\al}+\al\int_0^t\frac{\phi(x, t)-\phi(x, s)}{(t-s)^{1+\al}}\,ds \right) 
\end{equation}
for any $\phi\in C^{1}(\R^n\times [0, \infty))$, our definition is thus the same as \cite[Definition 2.2]{N}. A similar definition of viscosity solutions is cocurrently and independently proposed in \cite[Definition 2.1]{TY} for general space-time nonlocal parabolic problems. 

Another possible way to define sub- or supersolutions is to separate the term $\pl_t^{\al}\phi(x, t)$ in \eqref{def:vis} into two parts like \eqref{equiv time der} and replace $\phi$ in one or both of the parts by $u$. 
See \cite[Definition 2.1]{TY} and \cite[Definition 2.5]{GN}. 
%\[
%\frac{1}{\Gam(1-\al)}\left(\frac{u(x, t)-u(x, 0)}{t^\al}+\al\int_0^t\frac{\phi(x, t)-\phi(x, s)}{(t-s)^{1+\al}}\,ds \right) .
%\]
Such definitions are proved to be equivalent to Definition \ref{defn vis}. We refer to \cite[Lemma 2.3]{TY} and \cite[Proposition 2.5]{N} for proofs. 
Note that the original definition of viscosity solutions in \cite{N-thesis, GN} looks stronger but it turns out that it is the same \cite[Lemma 2.9, Proposition 3.6]{N-thesis}.
\end{rem}

For any $h>0$, define $\pl_{t}^{\al,h}: L^\infty_
{loc}(\R^n\times [0, \infty))\to L^\infty_{loc}(\R^n\times [0, \infty))$ to be 
\begin{equation}\label{eq:op discrete}
\pl_{t}^{\al,h}u(x, t):=\frac{1}{\Gam(2-\al)h^\al}\left\{
u(x, mh)
-\sum_{k=0}^{m-1} C_{m,k}u(x,kh)\right\}
\end{equation}
for $(x, t)\in \R^n\times [0, \infty)$, and $m\in \N\cup\{0\}$ satisfying $m=\lfloor t/h\rfloor$,  
where $\lfloor s\rfloor$ denotes the greatest integer less than or equal to $s\geq 0$. 

A locally bounded function $u: \R^n\times [0, \infty)\to \R$ is said to be a subsolution (resp., supersolution) of 
\begin{equation}\label{eq:discrete}
\pl_t^{\al, h} u+F\left(x, t, Du, D^2u\right)=0 \quad \text{in $\Omega\times (0, \infty)$}
\end{equation}
if for any $m\in \N$, $U=u(\cdot, mh)\in USC(\R^n)$ (resp., $U=u(\cdot, mh)\in LSC(\R^n)$) is a viscosity subsolution (resp, supersolution) of
\[
\frac{1}{\Gam(2-\al)h^\al}\left\{
U-\sum_{k=0}^{m-1} C_{m,k}u(\cdot, kh)\right\}
+F\left(x, mh, D U, D^2U\right)=0 \quad \text{in $\R^n$.}
\]
By definition, it is clear that $u^h$ given by \eqref{def:Uh} is a solution of \eqref{eq:discrete}.

\begin{prop}[Monotonicity]\label{prop:monotone}
Fix $h>0$.  Assume that {\rm(H1)} holds.  Let $U^h(\cdot, t)$, $V^h(\cdot, t)\in BUC(\R^n)$ for all $t\geq 0$ be, respectively, a subsolution and supersolution to \eqref{eq:discrete}. 
Then, $U^h(\cdot, mh)\leq V^h(\cdot, mh)$ in $\R^n$ for all $m\in \N$. 
\end{prop}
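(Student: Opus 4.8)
\emph{Plan.} My plan is to argue by induction on $m\in\N$, using the comparison principle in (H1) as the only substantive tool and the positivity \eqref{positive} of the coefficients $C_{m,k}$ as the structural engine that forces monotonicity. The first thing I would record is that, at each time level $m$, the functions $U^h(\cdot,mh)$ and $V^h(\cdot,mh)$ solve \emph{different} equations of the form \eqref{eq:elliptic}: both carry the same operator $\lambda u+F(x,mh,Du,D^2u)$ with $\lambda:=1/(\Gam(2-\al)h^\al)>0$, but their right-hand sides
\[
g_U^m:=\lambda\sum_{k=0}^{m-1}C_{m,k}U^h(\cdot,kh),\qquad g_V^m:=\lambda\sum_{k=0}^{m-1}C_{m,k}V^h(\cdot,kh)
\]
are assembled from the previous time levels and need not coincide. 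Hence (H1) cannot be applied to $U^h(\cdot,mh)$ and $V^h(\cdot,mh)$ directly, and reconciling these two data is exactly where the sign of the $C_{m,k}$ enters.

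\emph{Inductive step.} Assuming $U^h(\cdot,kh)\le V^h(\cdot,kh)$ in $\R^n$ for all $0\le k\le m-1$, I would multiply the ordered inequalities by $C_{m,k}\ge0$ (from \eqref{positive}) and sum to get $g_U^m\le g_V^m$ pointwise in $\R^n$; note $g_U^m,g_V^m\in BUC(\R^n)$, being finite nonnegative combinations of $BUC$ functions, so (H1) is applicable with these as data. By the definition of a subsolution of \eqref{eq:discrete}, $U=U^h(\cdot,mh)$ is a viscosity subsolution of $\lambda U+F(x,mh,DU,D^2U)=g_U^m$, and since $g_U^m\le g_V^m$ it is \emph{a fortiori} a subsolution of the same equation with data $g_V^m$. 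As $V=V^h(\cdot,mh)$ is a supersolution of precisely that equation, applying the comparison statement in (H1) with $t=mh$ and $g=g_V^m$ yields $U^h(\cdot,mh)\le V^h(\cdot,mh)$, closing the induction.

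\emph{Base case and main obstacle.} For $m=1$ the only preceding level is $k=0$, and since $C_{1,0}=f(1)=1$ the data reduce to $g_U^1=\lambda U^h(\cdot,0)$ and $g_V^1=\lambda V^h(\cdot,0)$, so the argument above requires the initial ordering $U^h(\cdot,0)\le V^h(\cdot,0)$. This ordering is not a consequence of the sub-/supersolution property at levels $m\ge1$ alone (one sees this already for $F\equiv0$, where the step-$1$ equation forces $U^h(\cdot,h)\le U^h(\cdot,0)$ and $V^h(\cdot,h)\ge V^h(\cdot,0)$), so it must be read into the hypotheses as the natural compatibility of initial data; with it the base case is identical to the inductive step. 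I expect the only real subtlety to be the bookkeeping around the mismatched right-hand sides: the whole argument hinges on translating \eqref{positive} into the monotone dependence $g_U^m\le g_V^m$, and then on the elementary but essential observation that a viscosity subsolution for a smaller source remains one for a larger source, which is precisely what lets (H1) close the comparison at every step.
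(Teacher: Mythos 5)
Your proof is correct and is essentially the paper's argument: the paper's one-line proof likewise iterates the elliptic comparison principle of (H1) step by step in $m$, using the positivity \eqref{positive} of the $C_{m,k}$ to propagate the ordering of the data (your $g_U^m\le g_V^m$) from one time level to the next. Your further observation that the initial ordering $U^h(\cdot,0)\le V^h(\cdot,0)$ must be read into the hypotheses is a correct and worthwhile catch: the proposition as literally stated omits it (and is false without it, as your $F\equiv 0$ example shows), while the paper's applications, e.g.\ in Lemma \ref{lem:bound} and Proposition \ref{prop:ini}, always verify this ordering before invoking the proposition.
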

\begin{proof}
Due to the positiveness \eqref{positive} of $C_{m,k}$, one can easily see that the scheme is monotone by iterating the comparison principle in (H1) for elliptic problems. 
\end{proof}
 
We next discuss below the boundedness of the scheme. 
\begin{lem}[Barrier]\label{lem:est1}
For any $h>0$, let $V^h(x, t):=(mh)^\alpha$ for all $x\in \R^n$ and $t\geq 0$ with $m=\lfloor t/h\rfloor$. 
Then, 
\[
\partial_t^{\alpha, h} V^h(x, t)\geq {(1-\alpha)\alpha\over \Gamma(2-\alpha)}\qquad \text{for all $x\in \R^n$ and $t\geq h$}. 
\]
\end{lem}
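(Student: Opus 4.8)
The plan is to evaluate the discrete operator on the explicit barrier $V^h$ and reduce the claim to an elementary scalar inequality. Since $V^h(x,t)=(mh)^\alpha$ depends only on $m=\lfloor t/h\rfloor$ and not on $x$, we have $V^h(x,kh)=(kh)^\alpha=k^\alpha h^\alpha$ for every $k\ge 0$. First I would rewrite the definition \eqref{eq:op discrete} in its equivalent ``increment'' form: the same summation-by-parts computation that produced the coefficients $C_{m,k}$ shows that, exactly,
\[
\partial_t^{\alpha,h}u(x,mh)=\frac{1}{\Gamma(2-\alpha)h^\alpha}\sum_{k=0}^{m-1}f(m-k)\bigl(u(x,(k+1)h)-u(x,kh)\bigr).
\]
Applying this to $V^h$ and cancelling the factor $h^\alpha$, one obtains for every $m\ge 1$
\[
\partial_t^{\alpha,h}V^h(x,mh)=\frac{1}{\Gamma(2-\alpha)}\sum_{k=0}^{m-1}f(m-k)\bigl((k+1)^\alpha-k^\alpha\bigr).
\]

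The next step is to bound this sum from below by a single explicit quantity. Since $f$ is non-increasing and $m-k\le m$, we have $f(m-k)\ge f(m)\ge 0$ for all $0\le k\le m-1$; moreover each increment $(k+1)^\alpha-k^\alpha$ is non-negative. Replacing $f(m-k)$ by $f(m)$ and using the telescoping identity $\sum_{k=0}^{m-1}\bigl((k+1)^\alpha-k^\alpha\bigr)=m^\alpha$ therefore yields
\[
\partial_t^{\alpha,h}V^h(x,mh)\ge\frac{f(m)\,m^\alpha}{\Gamma(2-\alpha)}.
\]
It then remains to prove the scalar estimate $f(m)\,m^\alpha\ge(1-\alpha)\alpha$. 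Writing $f(m)\,m^\alpha=m\bigl(1-(1-1/m)^{1-\alpha}\bigr)$, this follows from Bernoulli's inequality $(1-1/m)^{1-\alpha}\le 1-(1-\alpha)/m$ (valid because $1-\alpha\in(0,1)$), which in fact gives the stronger bound $f(m)\,m^\alpha\ge 1-\alpha\ge(1-\alpha)\alpha$.

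The only real obstacle is the passage to the lower bound on the sum: the individual summands for fixed $k$ (for instance the first or the last one) tend to $0$ as $m\to\infty$, so no single term can carry the estimate and one must exploit the sum as a whole. The monotonicity $f(m-k)\ge f(m)$ combined with the telescoping of the $x$-independent increments is precisely what converts the full sum into the manageable product $f(m)\,m^\alpha$; after that the concavity (Bernoulli) inequality closes the argument. An alternative, essentially equivalent, route would be to represent $f(m-k)=(1-\alpha)\int_{m-k-1}^{m-k}r^{-\alpha}\,dr$ and $(k+1)^\alpha-k^\alpha=\alpha\int_k^{k+1}s^{\alpha-1}\,ds$ and reduce the claim to showing that a certain double integral over a diagonal staircase region is at least $1$, but the telescoping argument above is shorter and avoids that estimate.
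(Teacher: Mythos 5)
Your proof is correct, and it diverges from the paper's in how the sum is estimated. Both proofs begin identically, by writing the scheme applied to $V^h$ in the increment form $\partial_t^{\alpha,h}V^h(x,mh)=\frac{1}{\Gamma(2-\alpha)}\sum_{k=0}^{m-1}f(m-k)\bigl((k+1)^\alpha-k^\alpha\bigr)$. The paper then bounds \emph{both} factors of each summand from below: $f(m-k)\ge (1-\alpha)/(m-k)^\alpha\ge (1-\alpha)/m^\alpha$ and $(k+1)^\alpha-k^\alpha\ge \alpha/(k+1)^{1-\alpha}\ge \alpha/m^{1-\alpha}$, so that each of the $m$ terms is at least $(1-\alpha)\alpha/m$ and the sum is at least $(1-\alpha)\alpha$. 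You instead bound only the factor $f(m-k)$ below by $f(m)$ (monotonicity of $f$), keep the increments exact so that they telescope to $m^\alpha$, and then prove the scalar inequality $f(m)\,m^\alpha=m\bigl(1-(1-1/m)^{1-\alpha}\bigr)\ge 1-\alpha$ by Bernoulli's inequality. Your route yields the sharper constant $1-\alpha\ge(1-\alpha)\alpha$, i.e.\ the factor $\alpha$ in the lemma is not actually needed, whereas the paper's symmetric termwise bound produces exactly the constant stated in the lemma; since that constant only enters the barrier constructions in Lemma \ref{lem:bound} and Proposition \ref{prop:ini} as a fixed positive number, the improvement has no downstream effect, but your argument is arguably cleaner in that it isolates the entire estimate in a single one-variable inequality.
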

\begin{proof}
We have 
\begin{equation}\label{eq:barrier1}
\partial_t^{\alpha,h} V^h(x, t)=\frac{1}{\Gam(2-\alpha)}\sum_{k=0}^{m-1}f(m-k)\big((k+1)^\alpha-k^\alpha\big)
\end{equation}
for all $x\in \R^n$ and $t\geq h$.  Noting that 
\begin{align*}
& f(m-k)\ge (1-\alpha)/(m-k)^\alpha\ge (1-\alpha)/m^\alpha, \\
& (k+1)^\alpha-k^\alpha\ge \alpha/(k+1)^{1-\alpha}\ge\alpha/m^{1-\alpha},  
\end{align*}
we can plug these estimates into \eqref{eq:barrier1} to deduce the .  
\end{proof}

\begin{lem}[Uniform boundedness]\label{lem:bound} 
Assume that \eqref{eq:op bound} and {\rm(H1)} hold.  
Let $u^h$ be given by \eqref{def:Uh} for any fixed $h>0$. Then, 
\[
|u^h(x, t)|\leq \sup_{\R^n}\left|u_0^h\right|+\frac{\Gam(2-\al)M_0}{(1-\al)\al}t^\alpha
\quad\text{for all} \ 
h>0, x\in \R^n, t\geq 0.
\]
\end{lem}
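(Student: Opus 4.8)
The plan is to sandwich $u^h$ between two space-independent barriers built from the function $V^h$ of Lemma~\ref{lem:est1} and then to invoke the monotonicity of the scheme. I would set $c:=\Gamma(2-\alpha)M_0/((1-\alpha)\alpha)$ and define
\[
W^h_\pm(x,t):=\pm\Big(\sup_{\R^n}|u_0^h|+c\,V^h(x,t)\Big),\qquad V^h(x,t)=(mh)^\alpha,\ m=\lfloor t/h\rfloor .
\]
The claim to be verified is that $W^h_+$ is a supersolution and $W^h_-$ a subsolution of the discrete equation \eqref{eq:discrete}, with $W^h_-(\cdot,0)\le u^h(\cdot,0)\le W^h_+(\cdot,0)$. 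The conclusion then follows from Proposition~\ref{prop:monotone}, since $(mh)^\alpha\le t^\alpha$ on $[mh,(m+1)h)$ lets me pass from the piecewise-constant bound to the stated $t^\alpha$ bound.

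The first step is a short computation showing that $\partial_t^{\alpha,h}$ annihilates additive constants. Exploiting the telescoping structure of the coefficients (namely $C_{m,0}=f(m)$ and $C_{m,k}=f(m-k)-f(m-k+1)$) one gets $\sum_{k=0}^{m-1}C_{m,k}=f(1)=1$, so by \eqref{eq:op discrete} the constant part of $W^h_\pm$ contributes nothing and $\partial_t^{\alpha,h}W^h_\pm=\pm c\,\partial_t^{\alpha,h}V^h$. The second step is to feed in the barrier estimate of Lemma~\ref{lem:est1}: for every $t\ge h$ (equivalently $m\ge1$) this yields $\partial_t^{\alpha,h}W^h_+\ge c\,(1-\alpha)\alpha/\Gamma(2-\alpha)=M_0$, and likewise $\partial_t^{\alpha,h}W^h_-\le -M_0$, which is exactly what the choice of $c$ is engineered to produce.

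With these two facts the sub/supersolution property becomes immediate, because each $W^h_\pm(\cdot,mh)$ is constant in $x$, hence smooth, so $DW^h_\pm=0$ and $D^2W^h_\pm=0$ and the inequalities can be checked classically. For $m\ge1$ I would write
\[
\partial_t^{\alpha,h}W^h_+(x,mh)+F(x,mh,0,0)\ge M_0-M_0=0,
\]
using $|F(x,mh,0,0)|\le M_0$ from \eqref{eq:op bound} with $R=0$; the reversed inequality for $W^h_-$ is analogous. Since $V^h(\cdot,0)=0$, one has $W^h_+(\cdot,0)=\sup_{\R^n}|u_0^h|\ge u_0^h=u^h(\cdot,0)$ and symmetrically for $W^h_-$, so the hypotheses of Proposition~\ref{prop:monotone} are met and I conclude $W^h_-(\cdot,mh)\le u^h(\cdot,mh)\le W^h_+(\cdot,mh)$ for all $m$, which is the asserted estimate.

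The only genuinely delicate point is organizational rather than computational: I must run the barrier argument at the level of the \emph{discrete} scheme, i.e.\ check that $W^h_\pm$ are sub/supersolutions in the sense introduced after \eqref{eq:discrete}, so that Proposition~\ref{prop:monotone} (which already encodes the iterated elliptic comparison through the nonnegativity \eqref{positive} of the $C_{m,k}$) applies verbatim. The fractional-derivative lower bound for $t\ge h$ supplied by Lemma~\ref{lem:est1} is precisely what compensates the worst case $F(x,mh,0,0)=-M_0$, while the initial layer $m=0$ is disposed of separately by the trivial ordering of the initial data.
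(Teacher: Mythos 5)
Your proof is correct and follows essentially the same route as the paper: the same barrier $\sup_{\R^n}|u_0^h|+\frac{\Gam(2-\al)M_0}{(1-\al)\al}V^h$ built from Lemma \ref{lem:est1}, verified as a discrete super/subsolution using \eqref{eq:op bound} with $R=0$, and then sandwiched via the monotonicity of Proposition \ref{prop:monotone}. The only difference is that you spell out details the paper leaves implicit (the telescoping identity $\sum_{k=0}^{m-1}C_{m,k}=1$ annihilating constants, and the passage from $(mh)^\alpha$ to $t^\alpha$), which is a matter of exposition rather than substance.
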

\begin{proof}
%By \eqref{eq:op bound}, we can find $M>\sup_{\R^n} |u_0|$ such that 
%\[
%C_M:=\inf_{(x, t)\in \R^n\times [0, \infty)}F(x, t, M, 0, 0)\geq 0.
%\]
We define
\[
W^h(x, t):=\sup_{\R^n}\left|u_0^h\right|+\frac{\Gam(2-\al)M_0}{(1-\al)\al}V^h(x, mh)
\]
for any $(x, t)\in \R^n\times [0, \infty)$, where $m=\lfloor t/h\rfloor$ and $V^h$ is given in Lemma \ref{lem:est1}.
In light of Lemma \ref{lem:est1}, we have 
\[
\pl_t^{\al, h} W^h(x,mh)+F\left(x, t, DW^h(x,mh), D^2W^h(x,mh)\right)
\ge M_0+F(x, t, 0, 0) \ge 0
\]
for all $m\in\N$. 
Combining with $U^h(\cdot,0)\le W^h(\cdot,0)$ on $\R^n$, by Proposition \ref{prop:monotone}, we get $U^h(\cdot,mh)\le W^h(\cdot,mh)$ for all $m\in\N$. 
Symmetrically, we get $U^h(x, mh)\ge -W^h(\cdot,mh)$ for all $m\in\N\cup\{0\}$, 
which implies the conclusion.  
\end{proof}

\section{Convergence of discrete schemes}\label{sec:main}

Let $u^h$ be the function defined by \eqref{def:Uh}. By Lemma \ref{lem:bound} and \eqref{initial approx}, we can define the half-relaxed limit of $u^h$ as follows:
\begin{equation}\label{half-relax}
\begin{aligned}
\ol{u}(x,t)&:=
\lim_{\delta\to 0}\sup\left\{u^h(y,s): |x-y|+|t-s|\le \delta,\ s\geq 0,\ 0<h\le \delta\right\},\\
\ul{u}(x,t)&:=
\lim_{\delta \to 0}\inf\left\{u^h(y,s): |x-y|+|t-s|\le \delta,\ s\geq 0,\ 0<h\le \delta\right\} 
\end{aligned}
\end{equation}
for all $(x,t)\in\R^n\times[0,\infty)$.

By the definition of Riemann integral and the operator $\pl_{t}^{\al,h}$, we have the following.
\begin{lem}\label{lem:limit}
Let $\pl_t^{\alpha, h}$ be given by \eqref{eq:op discrete}. Then for any $\psi\in C^{1}(\R^n\times[0,\infty))$, we have 
\[
\pl_{t}^{\al, h}\psi\to \pl_t^{\al}\psi 
\quad
\text{locally uniformly in} \ \R^n\times(0,\infty) \  \text{as}\ h\to0. 
\] 
\end{lem}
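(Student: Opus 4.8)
The plan is to rewrite both operators as integrals against the \emph{same} singular kernel and then reduce everything to the uniform continuity of $\pl_t\psi$ on compact sets. First I would use the telescoping identity already recorded in the introduction,
\[
\psi(x,mh)-\sum_{k=0}^{m-1}C_{m,k}\psi(x,kh)=\sum_{k=0}^{m-1}f(m-k)\bigl(\psi(x,(k+1)h)-\psi(x,kh)\bigr),
\]
together with the elementary computation $\int_{kh}^{(k+1)h}(mh-s)^{-\al}\,ds=\tfrac{1}{1-\al}f(m-k)h^{1-\al}$ and the relation $\Gam(2-\al)=(1-\al)\Gam(1-\al)$. These turn the discrete operator into
\[
\pl_t^{\al,h}\psi(x,t)=\frac{1}{\Gam(1-\al)}\sum_{k=0}^{m-1}\left(\int_{kh}^{(k+1)h}(mh-s)^{-\al}\,ds\right)\frac{\psi(x,(k+1)h)-\psi(x,kh)}{h},\qquad m=\lfloor t/h\rfloor,
\]
so that it carries exactly the same kernel $(mh-s)^{-\al}$ as $\pl_t^\al\psi(\cdot,mh)$.

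Next I would compare $\pl_t^{\al,h}\psi(x,t)$ with the Caputo derivative at the grid point $mh$. Writing the difference quotient as the average $\tfrac{1}{h}\int_{kh}^{(k+1)h}\pl_\tau\psi(x,\tau)\,d\tau$ and recalling
\[
\pl_t^\al\psi(x,mh)=\frac{1}{\Gam(1-\al)}\sum_{k=0}^{m-1}\int_{kh}^{(k+1)h}(mh-s)^{-\al}\pl_s\psi(x,s)\,ds,
\]
both quantities are integrals of $(mh-s)^{-\al}$ against a value of $\pl_t\psi(x,\cdot)$ on the interval $[kh,(k+1)h]$. Fixing a compact set $\{|x|\le L\}\times[0,T]$ and letting $\omega_\psi$ be a modulus of continuity of $\pl_t\psi$ there, on each subinterval one has $\bigl|\tfrac{1}{h}\int_{kh}^{(k+1)h}\pl_\tau\psi(x,\tau)\,d\tau-\pl_s\psi(x,s)\bigr|\le\omega_\psi(h)$. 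Summing and using $\sum_k\int_{kh}^{(k+1)h}(mh-s)^{-\al}\,ds=\int_0^{mh}(mh-s)^{-\al}\,ds=\tfrac{(mh)^{1-\al}}{1-\al}\le\tfrac{T^{1-\al}}{1-\al}$ yields
\[
\bigl|\pl_t^{\al,h}\psi(x,t)-\pl_t^\al\psi(x,mh)\bigr|\le\frac{T^{1-\al}}{\Gam(2-\al)}\,\omega_\psi(h)\longrightarrow 0
\]
uniformly on the compact set.

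It then remains to replace the grid value $mh$ by $t$, and this is where the only genuine technical point lies. Since $0\le t-mh<h$, I would need the continuity, hence uniform continuity on compact subsets of $\R^n\times(0,\infty)$, of $(x,t)\mapsto\pl_t^\al\psi(x,t)$, so that $\bigl|\pl_t^\al\psi(x,mh)-\pl_t^\al\psi(x,t)\bigr|\to 0$ uniformly as $h\to 0$. This continuity is obtained from the representation $\pl_t^\al\psi(x,t)=\frac{1}{\Gam(1-\al)}\int_0^t\tau^{-\al}\pl_s\psi(x,t-\tau)\,d\tau$ by dominated convergence: on a neighborhood of any $(x_0,t_0)$ with $t_0>0$ the integrand is bounded by $\tau^{-\al}\|\pl_t\psi\|_{\infty}\mathbf{1}_{\{\tau<2t_0\}}$, which is integrable because $\al<1$, and it converges pointwise a.e.\ in $\tau$. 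Combining this with the previous estimate and restricting to $h<t_0/2$ on each compact set gives $\pl_t^{\al,h}\psi\to\pl_t^\al\psi$ locally uniformly in $\R^n\times(0,\infty)$. Thus the main obstacle is not the core estimate, which is a direct kernel-matching argument, but the bookkeeping required to pass from the grid points to arbitrary $t$, which forces one to verify the continuity of the Caputo derivative.
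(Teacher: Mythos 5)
Your proof is correct, and it is essentially the argument the paper has in mind: the paper states this lemma without proof, asserting only that it follows ``by the definition of Riemann integral and the operator $\pl_t^{\al,h}$,'' and your kernel-matching computation (rewriting $\pl_t^{\al,h}\psi$ with the same kernel $(mh-s)^{-\al}$, estimating via the modulus of continuity of $\pl_t\psi$, and then passing from the grid point $mh$ to $t$ via continuity of $\pl_t^\al\psi$) is precisely the detailed version of that assertion. The only cosmetic point is that in the final step the radius $t_0/2$ should be the minimal time $t_{\min}$ of the compact set under consideration, but this is exactly what your argument uses.
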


\begin{prop}[Sub- and supersolution property]\label{prop:half}
Let $\ol{u}$ and $\ul{u}$ be the functions defined by \eqref{half-relax}. 
Then $\ol{u}$ and $\ul{u}$ are, respectively, a subsolution and supersolution to \eqref{eq:1}.  
\end{prop}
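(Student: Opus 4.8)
The plan is to run the Barles--Souganidis half-relaxed limit method, adapted to the nonlocal-in-time operator $\partial_t^{\alpha,h}$ from \eqref{eq:op discrete}. I will prove that $\ol u$ is a subsolution of \eqref{eq:1}; the argument that $\ul u$ is a supersolution is entirely symmetric. By Definition \ref{defn vis} it suffices to fix $\phi\in C^2(\R^n\times[0,\infty))$ such that $\ol u-\phi$ attains a local maximum at some $(x_0,t_0)$ with $t_0>0$, and to deduce
\[
\partial_t^\alpha\phi(x_0,t_0)+F\big(x_0,t_0,D\phi(x_0,t_0),D^2\phi(x_0,t_0)\big)\le 0 .
\]
After the usual strictification of the maximum (adding a lower-order perturbation and letting it tend to $0$ at the end), I may assume the maximum is strict on a fixed closed ball around $(x_0,t_0)$.

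First I extract discrete maximizers. By the definition \eqref{half-relax} of $\ol u$ and the uniform bound of Lemma \ref{lem:bound}, there are $h_j\to0$ and points converging to $(x_0,t_0)$ that realize a maximum of $u^{h_j}-\phi$ on that ball, with the values converging to $\ol u(x_0,t_0)$. Because $u^h$ is constant in $t$ on each interval $[mh,(m+1)h)$, the maximizing times may be taken to be grid times $m_jh_j$, and $t_0>0$ forces $m_j\ge1$ for large $j$. Denote the maximizing points $(x_j,m_jh_j)$. Then $x_j$ is a local maximum point of the spatial function $U^{h_j}(\cdot,m_jh_j)-\phi(\cdot,m_jh_j)$, and since $U^{h_j}(\cdot,m_jh_j)$ is a viscosity subsolution of \eqref{eq:m} with $t=m_jh_j$, testing with $\phi(\cdot,m_jh_j)$ gives
\[
\partial_t^{\alpha,h_j}u^{h_j}(x_j,m_jh_j)+F\big(x_j,m_jh_j,D\phi(x_j,m_jh_j),D^2\phi(x_j,m_jh_j)\big)\le 0 .
\]

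It remains to let $j\to\infty$. The $F$-term converges to $F(x_0,t_0,D\phi(x_0,t_0),D^2\phi(x_0,t_0))$ by continuity, so everything reduces to proving $\liminf_j\partial_t^{\alpha,h_j}u^{h_j}(x_j,m_jh_j)\ge\partial_t^\alpha\phi(x_0,t_0)$. Splitting $\partial_t^{\alpha,h_j}u^{h_j}=\partial_t^{\alpha,h_j}\phi+\partial_t^{\alpha,h_j}(u^{h_j}-\phi)$ and using the consistency of Lemma \ref{lem:limit}, which gives $\partial_t^{\alpha,h_j}\phi(x_j,m_jh_j)\to\partial_t^\alpha\phi(x_0,t_0)$, the task becomes $\liminf_j\partial_t^{\alpha,h_j}(u^{h_j}-\phi)(x_j,m_jh_j)\ge0$. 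The algebra cooperates for the nearby times: since each $C_{m,k}\ge0$ by \eqref{positive} and the coefficients sum to $1$ (a telescoping of $f$), the quantity $\partial_t^{\alpha,h_j}(u^{h_j}-\phi)(x_j,m_jh_j)$ is a $\Gamma(2-\alpha)^{-1}h_j^{-\alpha}$-scaled positive combination of the increments $(u^{h_j}-\phi)(x_j,m_jh_j)-(u^{h_j}-\phi)(x_j,kh_j)$, and for grid times $kh_j$ inside the ball these increments are nonnegative by maximality.

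The main obstacle is exactly the memory of $\partial_t^{\alpha,h}$: the sum ranges over \emph{all} past grid times $kh_j$, including those far from $t_0$, where only a local maximum is available and the increments can be negative. The tail of the coefficient mass, $\sum_{k:\,kh_j\le t_0-r}C_{m_j,k}$, is of order $h_j^\alpha r^{-\alpha}$, so after the scaling $\Gamma(2-\alpha)^{-1}h_j^{-\alpha}$ the far-past contribution is only of order $r^{-\alpha}$, which does \emph{not} vanish as the ball radius $r\to0$; this is the heart of the difficulty and shows that the local maximum alone is insufficient. To close the gap I would control the whole history at once rather than bound the tail crudely: pass to the limit directly in the discrete memory sum $\Gamma(2-\alpha)^{-1}h_j^{-\alpha}\sum_k C_{m_j,k}u^{h_j}(x_j,kh_j)$ and compare it with the integral memory of $\ol u$, exploiting the uniform bound of Lemma \ref{lem:bound}, the monotone ordering of Proposition \ref{prop:monotone}, and the equivalent form \eqref{equiv time der} of the fractional derivative together with the equivalent viscosity formulation recalled after Definition \ref{defn vis}, in which the memory term is tested against the solution itself rather than against $\phi$. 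Once the memory term is under control, combining it with the consistency limit for the local part and the continuity of $F$ yields the desired inequality, establishing that $\ol u$ is a subsolution and, symmetrically, that $\ul u$ is a supersolution.
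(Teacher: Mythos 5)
You correctly isolate the crux: with only a \emph{local} maximum, the far-past part of $\partial_t^{\alpha,h_j}(u^{h_j}-\phi)$ carries coefficient mass $\sum_{k:\,kh_j\le t_0-r}C_{m_j,k}=f(m_j-\lfloor (t_0-r)/h_j\rfloor)\sim (1-\alpha)h_j^{\alpha}r^{-\alpha}$, so after the $h_j^{-\alpha}$ scaling its contribution is of order $r^{-\alpha}$ (in fact it blows up as $r\to0$), and the Barles--Souganidis argument in its local form does not close. But your proposed repair --- ``pass to the limit directly in the discrete memory sum and compare it with the integral memory of $\overline{u}$'' using Lemma \ref{lem:bound}, Proposition \ref{prop:monotone}, \eqref{equiv time der} and the equivalent definitions --- is a statement of intent, not a proof. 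To execute it you would need, at least: a near/far splitting of the memory; a reverse-Fatou argument relating $\limsup_j u^{h_j}(x_j,kh_j)$ to the upper semicontinuous envelope $\overline{u}(x_0,s)$ under the measures $\Gamma(2-\alpha)^{-1}h_j^{-\alpha}\sum_k C_{m_j,k}\delta_{kh_j}$, whose total mass diverges and which concentrate at $s=t_0$; a discrete-to-continuum convergence statement for this singular kernel; and the equivalence, for the \emph{limit} function, of the resulting formulation (memory tested against $u$ itself) with Definition \ref{defn vis}. None of this is carried out, so the proof has a genuine gap at exactly the step you flag as the heart of the matter.

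The paper closes this gap by a different and much simpler device: it never works with local maxima. After adding $|x-\hat{x}|^4$ so that the test function $\varphi$ is coercive in space, it chooses $h_j\to0$, then a time $t_j$ at which the \emph{space-time} supremum of $u^{h_j}-\varphi$ is attained up to an error $h_j$, and then a global spatial maximizer $x_j$ at that time (it exists by coercivity and Lemma \ref{lem:bound}); see \eqref{claim2}--\eqref{claim3}. The point of \eqref{claim3} is that it is global in time: $(u^{h_j}-\varphi)(x_j,kh_j)\le (u^{h_j}-\varphi)(x_j,t_j)+h_j$ for \emph{every} past grid time $k=0,\dots,N_j-1$, not only for those near $t_0$. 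Since $C_{m,k}\ge0$ by \eqref{positive} and $\sum_{k=0}^{m-1}C_{m,k}=f(1)=1$ by telescoping, the whole history is then controlled with total error $h_j$, giving $\partial_t^{\alpha,h_j}u^{h_j}(x_j,t_j)\ge \partial_t^{\alpha,h_j}\varphi(x_j,N_jh_j)-h_j^{1-\alpha}/\Gamma(2-\alpha)$; the $O(h_j^{1-\alpha})$ error vanishes, and Lemma \ref{lem:limit} plus continuity of $F$ finish the proof. So the missing idea is to trade the local maximum for an almost-global (in space and time) maximum of $u^{h_j}-\varphi$, which is precisely what renders the nonlocal-in-time memory harmless; without it, or without fully executing your Fatou-type alternative, the argument does not go through.
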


\begin{proof}
We only prove that $\ol{u}$ is a subsolution to \eqref{eq:1} as we can similarly prove that 
$\ul{u}$ is a supersolution to \eqref{eq:1}.  

Take a test function $\varphi\in C^2(\R^n\times[0,\infty))$ and 
$(\hat{x},\hat{t})\in \R^n\times(0,\infty)$ so that $\ol{u}-\varphi$ takes 
a strict maximum at $(\hat{x},\hat{t})$ with $(\ol{u}-\varphi)(\hat{x},\hat{t})=0$. 
By adding $|x-\hat{x}|^4$ to $\varphi$ (we still denote it by $\varphi$), 
we may assume that $\varphi(x,t)\to \infty$ as $|x|$ uniformly for all  $t\geq 0$. 

We first claim that there exists $(x_{j},t_j)\in\R^n\times(0,\infty)$, $h_j>0$ so that 
$(x_j,t_j)\to(\hat{x},\hat{t})$ and  $h_j\to0$ as $j\to\infty$, 
\begin{align}
&u^{h_j}(\cdot, t_j)-\varphi(\cdot,t_j) \ \text{takes a maximum at} \ x_{j},  
\label{claim2}\\ 
&
\sup_{(x,t)\in\R^n\times(0,\infty)}(u^{h_j}-\varphi)(x,t)
<(u^{h_j}-\varphi)(x_j,t_j)+h_j.   \label{claim3}
\end{align}
Indeed, by definition of $\ol{u}$, there exists $(y_j,s_j)\in\R^n\times(0,\infty)$, and $h_j>0$ so that 
\begin{align*}
&(y_j,s_j)\to(\hat{x},\hat{t}), \ h_j\to0, 
\ \text{and} \ 
u^{h_j}(y_j,s_j)\to\ol{u}(\hat{x},\hat{t}) \quad\text{as} \ j\to\infty.  
\end{align*}
We next take $t_j>0$ such that 
\[
\sup_{(x,t)\in\R^n\times(0,\infty)}(u^{h_j}-\varphi)(x,t)
<
\sup_{x\in\R^n}(u^{h_j}-\varphi)(x,t_j)+h_j. 
\]
Also, by Lemma \ref{lem:bound} again, there exists $x_j\in\R^n$ so that 
\[
\sup_{x\in\R^n}(u^{h_j}-\varphi)(x,t_j)
=\max_{x\in\R^n}(u^{h_j}-\varphi)(x,t_j)
=(u^{h_j}-\varphi)(x_j,t_j). 
\]
Then, we can also easily check that 
$(x_j,t_j)\to(\hat{x},\hat{t})$ as $j\to\infty$. 
\medskip

Set $N_j:=\lfloor t_j/h_j\rfloor$.  
Then we have $u^{h_j}(\cdot ,t_j)=U^{h_j}(\cdot, N_jh_{j})$ in $\R^n$. 
Since $U^{h_j}(\cdot, N_jh_{j})$ is a viscosity solution to \eqref{eq:m} with $m=N_j$ and $h=h_j$, 
in light of \eqref{claim2}, 
we obtain 
\[
\pl_{t}^{\al, h_j}u^{h_j}(x_{j}, t_j)+F\left(x_j, t_j, D\varphi(x_{j},t_{j}),D^2\varphi(x_{j},t_{j})\right)\le 0. 
\]
Set 
$\sig_j:=\max_{x\in\R^n}(u^{h_j}-\varphi)(x,t_j)
=u^{h_j}(x_j,t_j)-\varphi(x_j, t_j)$.  
In light of \eqref{claim3}, we have 
\[
(u^{h_j}-\varphi)(x_j,kh_j)
\le h_j+\sig_j
\]
for all $k=0,\ldots,N_j-1$. 
Hence,  
\begin{align*}
& \Gam(2-\al)(h_j)^\al\pl_{t}^{\al, h_j}u^{h_j}(x_j, t_j)=
u^{h_j}(x_j,N_jh_j)
-\sum_{k=0}^{N_j-1} C_{N_j,k}u^{h_j}(x_j, kh_j)\\
\ge&\,  
\varphi(x_j,N_jh_j)+\sig_j
-\sum_{k=0}^{N_j-1} C_{N_j, k}\left(\varphi(x_j, kh_j)+h_j+\sig_j\right).
\end{align*}
Noting that 
\begin{equation*}\label{eq:sum}
\sum_{k=0}^{N_j-1} C_{N_j, k}=f(1)=1, 
\end{equation*} 
we obtain 
\begin{align*}
\pl_{t}^{\al,h_j}u^{h_j}(x_j,N_jh_j)
\ge&\,  \frac{1}{\Gam(2-\al)(h_j)^\al}\left\{
\varphi(x_j,N_jh_j)-\sum_{k=0}^{N_j-1} C_{N_j, k}\varphi(x_j, kh_j)-h_j\right\}\\
=&\,  
\pl_{t}^{\al,h_j}\varphi(x_j,N_jh_j)+O(h_j^{1-\alpha}). 
\end{align*}
We therefore obtain 
\[
\pl_{t}^{\al,,h_j}\varphi(x_{j},N_jh_j)
+F\left(x_j, t_j, D\varphi(x_{j},t_{j}),D^2\varphi(x_{j},t_{j})\right)
\le O(h_j^{1-\alpha}).
\]
By Lemma \ref{lem:limit} and the continuity of $F$, sending $j\to\infty$ yields 
\[
\pl_{t}^{\al}\varphi(\hat{x},\hat{t})+
F\left(\hat{x}, \hat{t}, D\varphi(\hat{x},\hat{t}),D^2\varphi(\hat{x},\hat{t})\right)
\le 0. 
\qedhere
\]
\end{proof}

\begin{prop}[Initial consistency]\label{prop:ini}
Assume that \eqref{eq:op bound} and {\rm(H1)} hold.  Let $\ol{u}$ and $\ul{u}$ be the functions defined by \eqref{half-relax}.
Then $\ol{u}\leq u_0\leq \ul{u}$ in $\R^n$.
\end{prop}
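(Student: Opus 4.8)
The plan is to establish the two inequalities by a barrier argument combined with the monotonicity of the scheme (Proposition \ref{prop:monotone}); I only describe the upper bound $\ol u\le u_0$, since $\ul u\ge u_0$ is symmetric. Fix $z\in\R^n$ and $\ep>0$. The goal is to trap $u^h$ from above near $(z,0)$ by a supersolution of the discrete scheme built from a spatial part dominating $u_0$ and the temporal barrier $V^h$ of Lemma \ref{lem:est1}.

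For the spatial part I would take a function $\phi\in C^2(\R^n)$ that is bounded together with its first two derivatives, satisfies $\phi(z)=u_0(z)+\ep$, and dominates $u_0$ globally, for instance
\[
\phi(x):=u_0(z)+\ep+K(1-e^{-|x-z|^2}),
\]
with $K$ large. Using the uniform continuity of $u_0$ for $x$ near $z$ and the boundedness of $u_0$ for $x$ far from $z$, one checks $\phi\ge u_0$ on $\R^n$ once $K$ is large enough (depending on $\ep$); the key point is that $|D\phi|,|D^2\phi|\le R$ for some $R=R(K)$, so \eqref{eq:op bound} gives $F(x,t,D\phi,D^2\phi)\ge -M_R$. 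I then set
\[
W^h(x,t):=\phi(x)+A\,(mh)^\al+\eta_h,\qquad m=\lfloor t/h\rfloor,
\]
where $\eta_h:=\sup_{\R^n}|u_0^h-u_0|\to0$ by \eqref{initial approx}. Because $\sum_{k=0}^{m-1}C_{m,k}=f(1)=1$, a function constant in $t$ has vanishing discrete fractional derivative, so $\pl_t^{\al,h}W^h=A\,\pl_t^{\al,h}V^h\ge A(1-\al)\al/\Gam(2-\al)$ for $t\ge h$ by Lemma \ref{lem:est1}, while $DW^h=D\phi$ and $D^2W^h=D^2\phi$. Choosing $A\ge \Gam(2-\al)M_R/((1-\al)\al)$ makes $W^h$ a smooth (hence viscosity) supersolution of \eqref{eq:discrete}. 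Since $W^h(\cdot,0)=\phi+\eta_h\ge u_0+\eta_h\ge u_0^h=U^h(\cdot,0)$, Proposition \ref{prop:monotone} (with this initial ordering) yields $u^h(x,t)\le \phi(x)+A(mh)^\al+\eta_h$ for all $x,t,h$.

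It then remains to pass to the half-relaxed limit. For $(y,s)$ with $|y-z|+s\le\delta$ and $0<h\le\delta$ one has $mh\le s\le\delta$, hence $u^h(y,s)\le \phi(y)+A\delta^\al+\sup_{0<h\le\delta}\eta_h$; letting $\delta\to0$ and using the continuity of $\phi$ together with $\eta_h\to0$ gives $\ol u(z,0)\le\phi(z)=u_0(z)+\ep$. As $\ep>0$ is arbitrary, $\ol u(z,0)\le u_0(z)$, and the mirror construction (with a spatial function lying below $u_0$ and the temporal term $-A(mh)^\al$, producing a subsolution) gives $\ul u(z,0)\ge u_0(z)$.

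I expect the main obstacle to be the construction of the spatial barrier $\phi$. A naive quadratic $u_0(z)+\ep+L|x-z|^2$ dominates $u_0$ but has unbounded gradient and Hessian, so \eqref{eq:op bound} would fail to control $F(x,t,D\phi,D^2\phi)$; one must instead produce a $\phi$ that is bounded with bounded derivatives yet still dominates $u_0$ on all of $\R^n$, which is possible precisely because $u_0\in BUC(\R^n)$. Once this is done, the time direction is handled cleanly by the already-established barrier $V^h$, and monotonicity of the scheme does the rest.
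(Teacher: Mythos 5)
Your proof is correct and follows essentially the same route as the paper: a spatial barrier dominating the initial data plus the temporal barrier of Lemma \ref{lem:est1}, made into a supersolution of \eqref{eq:discrete} via \eqref{eq:op bound}, then Proposition \ref{prop:monotone} and passage to the half-relaxed limit. The only difference is cosmetic: where the paper abstractly invokes a bounded smooth $\phi_\sigma$ with $\phi_\sigma(x_0)\le u_0(x_0)+\sigma$ and $u_0^h\le\phi_\sigma$, you construct it explicitly as $u_0(z)+\ep+K(1-e^{-|x-z|^2})$ and absorb the error $\sup_{\R^n}|u_0^h-u_0|$ as an additive constant, which makes the same argument slightly more self-contained.
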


\begin{proof}
Fix any $x_0\in \R^n$. Since $u_0\in BUC(\R^n)$ and \eqref{initial approx} holds, for any $\sigma>0$ we can find a bounded smooth function $\phi_\sigma$ such that 
%\[
%u_0(x)\leq u_0(x_0)+L|x-x_0| \quad \text{for all $x\in \R^n$.}
%\]
%It follows that for any $\sigma>0$
$\phi_\sigma(x_0)\leq u_0(x_0)+\sigma$ and 
$u_0^h(x)\leq \phi_\sigma(x)$ for all $x\in \R^n$ and all $h>0$ small.
%Set, for any $x\in \R^n$,
%\[
%\phi_0(x)=u_0(x_0)+L\left(|x-x_0|^2+\sigma^2\right)^{1\over 2}
%\]
%We can modify the value of $\phi_0$ near space infinity so that $\phi_0\in C^2(\R^n) \cap W^{2, \infty}(\R^n)$. 
 We claim that 
\[
\phi^h(x , t)=\phi_\sigma(x)+{M_{R_\sigma}\Gamma(2-\alpha)\over (1-\alpha)\alpha}t^\alpha
\]
is a supersolution of \eqref{eq:discrete} with $h>0$ small, where $M_{R_\sigma}$ is given in \eqref{eq:op bound} with 
\[
R_\sigma=\sup_{\R^n}\left(|D \phi_\sigma|+|D^2 \phi_\sigma|\right).
\]
%To show this claim, it suffices to prove the estimate when $t=Nh$, since by definition $u^\vep(x, t)=u^\vep(x, N\vep)$ for any $N\vep\leq t<(N+1)\vep$ with $N=[t/\vep]$. 
Indeed, for any $x\in \R^n$, applying Lemma \ref{lem:est1}, 
we deduce that for all $m\in\N$,
\[
\partial_t^{\alpha, h}\phi^h(x, mh)\geq M_{R_\sigma}\geq -F\left(x, mh, D \phi_\sigma(x), D^2 \phi_\sigma(x)\right)
\]
for all $x\in \R^n$.
We thus can adopt Proposition \ref{prop:monotone} to obtain that 
$u^h(x, Nh)\leq \phi^h(x, Nh)$ for all $x\in \R^n$ and $t\geq 0$ with $N=\lfloor t/h\rfloor$,
which implies that 
\[
u^h(x, t)\leq \phi_\sigma(x)+{M_{R_\sigma}\Gamma(2-\alpha)\over (1-\alpha)\alpha}t^\alpha
\]
for all $x\in \R^n$ and $t\geq 0$. We thus have 
\[
\overline{u}(x_0, 0)\leq \phi_\sigma(x_0), 
\]
which implies, by letting $\sigma\to 0$,  that 
$\overline{u}(x_0, 0)\leq u_0(x_0)$. 
The proof for the part on $\underline{u}$ is symmetric and therefore omitted here. 
\end{proof}

\begin{proof}[Proof of Theorem {\rm\ref{thm:main}}]
If (H2) holds, then the conclusion of the theorem is a straightforward result of Propositions \ref{prop:half} and \ref{prop:ini}. 
\end{proof}

\begin{thebibliography}{30} 
\bibitem{A}
M. Allen, 
\emph{A nondivergence parabolic problem with a fractional time derivative}, 
Differential Integral Equations 31 (2018), no. 3-4, 215--230. 

\bibitem{ACV}
M. Allen, L. Caffarelli, A. Vasseur, 
\emph{A parabolic problem with a fractional time derivative},
Arch. Ration. Mech. Anal. 221 (2016), no. 2, 603--630. 

%\bibitem{CS}
%P. Cannarsa, C. Sinestrari, \emph{Semiconcave functions, Hamilton-Jacobi equations, and optimal control},  Progress in Nonlinear Differential Equations and their Applications, 58. Birkh\"auser Boston, Inc., Boston, MA (2004).

\bibitem{C}
Z.-Q. Chen, 
\emph{Time fractional equations and probabilistic representation}, 
Chaos Solitons Fractals 102 (2017), 168--174. 

\bibitem{CKKW}
Z.-Q. Chen, P. Kim, T. Kumagai, J. Wang, 
\emph{Heat kernel estimates for time fractional equations},
preprint. 

\bibitem{CIL}
M. G. Crandall, H. Ishii, P.-L. Lions, 
\emph{User's guide to viscosity solutions of second order partial differential equations}, 
Bull. Amer. Math. Soc. (N.S.) 27 (1992), no. 1, 1--67. 

\bibitem{G}
Y. Giga. \emph{Surface evolution equations, a level set approach}, volume 99 of Monographs in Mathematics. Birkh\"auser Verlag, Basel, 2006. 

\bibitem{GN}
Y. Giga, T. Namba, 
\emph{Well-posedness of Hamilton-Jacobi equations with Caputo's time fractional derivative}, 
Comm. Partial Differential Equations 42 (2017), no. 7, 1088--1120.

%\bibitem{I} H. Ishii, \emph{Perron's method for Hamilton-Jacobi equations}, Duke Math. J. 55 (1987), no.2, 369--384.

\bibitem{JLS}
R. Jensen, P.-L. Lions, P. E. Souganidis,
\emph{
A uniqueness result for viscosity solutions of second order fully nonlinear partial differential equations}, Proc. Amer. Math. Soc. 102, (1988), no. 4, 975--978.

\bibitem{KY}
A. Kubica, M. Yamamoto,
\emph{Initial-boundary value problems for fractional diffusion equations with time-dependent coefficients},
Fract. Calc. Appl. Anal. 21 (2018), no. 2, 276--311.

\bibitem{L}
Y. Luchko, 
\emph{Maximum principle for the generalized time-fractional diffusion equation}, 
J. Math. Anal. Appl. 351 (2009), no. 1, 218--223.

%\bibitem{MS}
%M. M. Meerschaert, H.-P. Scheffler, 
%\emph{Limit theorems for continuous-time random walks with infinite mean waiting times}, 
%J. Appl. Probab. 41 (2004), no. 3, 623--638.

\bibitem{MK}
R. Metzler, J. Klafter, 
\emph{The random walk's guide to anomalous diffusion: a fractional dynamics approach},  
Phys. Rep. 339 (2000), no. 1, 77 pp. 

\bibitem{N-thesis}
T. Namba, 
\emph{Analysis for viscosity solutions with special emphasis on anomalous effects}, 
January 2017, thesis, the University of Tokyo. 

\bibitem{N}
T. Namba, 
\emph{On existence and uniqueness of viscosity solutions for second order fully nonlinear PDEs with Caputo time fractional derivatives}, 
NoDEA Nonlinear Differential Equations Appl. 25 (2018), no. 3, Art. 23, 39 pp.

\bibitem{SY}
K. Sakamoto, M. Yamamoto, 
\emph{Initial value/boundary value problems for fractional diffusion-wave equations and applications to some inverse problems}, 
J. Math. Anal. Appl. 382 (2011), no. 1, 426--447. 

\bibitem{TY}
E. Topp, M. Yangari, 
\emph{Existence and uniqueness for parabolic problems with Caputo time derivative}, 
J. Differential Equations 262 (2017), no. 12, 6018--6046. 

\bibitem{Z} 
R. Zacher, 
\emph{Weak solutions of abstract evolutionary integro-differential equations in Hilbert spaces}, 
Funkcial. Ekvac. 52 (2009), no. 1, 1--18.

\end {thebibliography}
\end{document}